\declaretheorem[style=plain,numbered=no,name=Theorem]{theorem}
\newcommand{\indef}[1]{\emph{#1}}
\renewcommand{\leq}{\leqslant}
\renewcommand{\geq}{\geqslant}
\begin{document}

\title{A note on the number of Egyptian fractions}

\author[N. Lebowitz-Lockard]{Noah Lebowitz-Lockard}
\address{Department of Mathematics, University of Texas, Tyler, TX 75799}
\email{nlebowitzlockard@uttyler.edu}

\author[V. Souza]{Victor Souza}
\address{Department of Pure Mathematics and Mathematical Statistics, University of Cambridge, Cambridge, United Kingdom}
\email{vss28@cam.ac.uk}

\begin{abstract}
Refining an estimate of Croot, Dobbs, Friedlander, Hetzel and Pappalardi, we show that for all $k \geq 2$, the number of integers $1 \leq a \leq n$ such that the equation $a/n = 1/m_1 + \dotsc + 1/m_k$ has a solution in positive integers $m_1, \dotsc, m_k$ is bounded above by $n^{1 - 1/2^{k-2} + o(1)}$ as $n$ goes to infinity.
The proof is elementary.
\end{abstract}

\maketitle


For a positive integer $k$, let $A_k(n)$ be the number of integers $1 \leq a \leq n$ for which $a / n$ has a $k$-term Egyptian fraction representation.
Namely, $A_k (n)$ is the number of $1 \leq a \leq n$ for which there are integers $m_1 \leq \dotsb \leq m_k$ such that
\begin{equation}
\label{eq:egyptian}
    \frac{a}{n} = \frac{1}{m_1} + \dotsb + \frac{1}{m_k}.
\end{equation}

Croot, Dobbs, Friedlander, Hetzel and Pappalardi~\cite{Croot2000-pk} proved that for any $k \geq 2$,
\begin{equation}
\label{eq:croot-bound}
    A_k(n) \leq n^{\alpha_k + o(1)}
\end{equation}
as $n$ goes to infinity, where $\alpha_k = 1 - 2/(3^{k-2} + 1)$.
In particular, this shows that $A_2(n) = n^{o(1)}$.
Refining their ideas, we get the following improved estimate.

\begin{theorem}
For every $k \geq 2$, we have $A_k(n) \leq n^{\beta_k + o(1)}$, where $\beta_k = 1 - 1/2^{k-2}$.
\end{theorem}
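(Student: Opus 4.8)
The plan is to induct on $k$, taking the base case $k=2$ from the bound $A_2(n)\le n^{o(1)}$ already recorded in \eqref{eq:croot-bound}. It is convenient first to note that $A_k(n)$ equals the number of rationals $q\in(0,1]$ admitting a $k$-term Egyptian representation whose reduced denominator divides $n$, since $a\mapsto a/n$ is a bijection between $\{1,\dots,n\}$ and the fractions in $(0,1]$ with denominator dividing $n$, and being Egyptian is a property of the rational. Given such a $q$, fix a representation \eqref{eq:egyptian} with $m_1\le\dots\le m_k$ and peel off the largest term: writing $m=m_1$ and $q'=q-1/m=1/m_2+\dots+1/m_k$, the pair $(m,q')$ determines $q$, and $q'$ has a $(k-1)$-term representation. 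I will bound the number of admissible $q$ by summing over the possible values of $m$, split at a threshold $T$ chosen at the end.

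For the tail $m>T$ I use only the size of the largest term: since every summand is at most $1/m$ we have $q\le k/m<k/T$, hence $a=qn<kn/T$, so there are at most $kn/T$ admissible values of $a$. This contributes $kn/T$ to the bound, uniformly in the structure of the representation, and is the easy half of the argument.

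For the main range $m\le T$ I fix $m$ and count the admissible remainders $q'\in E_{k-1}$. The crux of the refinement is to show this count is at most $n^{\beta_{k-1}+o(1)}$, that is, no worse than if $\mathrm{denom}(q')$ were forced to divide $n$ itself. The mechanism is a denominator analysis: since $q=1/m+q'$ has denominator dividing $n$, at each prime $\ell$ any $\ell$-adic valuation of $\mathrm{denom}(q')$ exceeding $v_\ell(n)$ must be cancelled by the term $1/m$, which forces $v_\ell(\mathrm{denom}(q'))=v_\ell(m)$ there and pins the numerator of $q'$ into a residue class modulo the corresponding prime power. Thus the part of $\mathrm{denom}(q')$ not dividing $n$ is exactly the part of $m$ exceeding $n$, and the accompanying numerator congruence compensates for the enlarged denominator, so that the $(k-1)$-term induction hypothesis can be applied effectively at scale $n$ rather than at scale $nm$. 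Securing this per-$m$ count of $n^{\beta_{k-1}+o(1)}$, rather than the weaker $(nm)^{\beta_{k-1}+o(1)}$ that a black-box application of induction yields, is the heart of the matter and the main obstacle; it is exactly here that one improves on the recursion $\alpha_k=(1+2\alpha_{k-1})/(2+\alpha_{k-1})$ that underlies \eqref{eq:croot-bound}.

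Granting the per-$m$ bound, the main range contributes at most $\sum_{m\le T}n^{\beta_{k-1}+o(1)}=T\,n^{\beta_{k-1}+o(1)}$, so that $A_k(n)\le kn/T+T\,n^{\beta_{k-1}+o(1)}$. Choosing $T=n^{(1-\beta_{k-1})/2}$ to balance the two terms gives $A_k(n)\le n^{(1+\beta_{k-1})/2+o(1)}$, and since $(1+\beta_{k-1})/2=1-\tfrac12\cdot 2^{-(k-3)}=1-2^{-(k-2)}=\beta_k$, the induction closes. I expect the denominator and numerator bookkeeping in the main range to be the delicate point, whereas the threshold optimization and the base case are routine.
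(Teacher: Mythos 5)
Your reduction is correct up to the point you yourself flag as the heart of the matter, and that is exactly where the proof breaks: nothing in the proposal establishes the per-$m$ bound. What you need is that, for each fixed $m \leq T$, the number of $a \leq n$ with $a/n - 1/m$ expressible as a sum of $k-1$ unit fractions is at most $n^{\beta_{k-1} + o(1)}$. The inductive hypothesis is purely a cardinality statement: among the fractions $a'/N$ with $1 \leq a' \leq N$, at most $N^{\beta_{k-1}+o(1)}$ are sums of $k-1$ unit fractions. It carries no information whatsoever about how those representable numerators are distributed among residue classes. Your valuation analysis does show, correctly, that the numerator of $q' = a/n - 1/m$, written over the denominator $\operatorname{lcm}(n,m)$, is confined to an arithmetic progression of density about $1/m$; but to convert that into a saving of the factor $m^{\beta_{k-1}}$ over the black-box bound $(nm)^{\beta_{k-1}+o(1)}$, you would need to know that representable numerators do not concentrate inside that particular progression, or else exhibit an injection of your restricted set into representable fractions of denominator about $n$. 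Neither is available from the induction hypothesis, so the ``compensation'' step is an assertion rather than an argument, and the recursion built on it (which would indeed yield $\beta_k$) is unsupported. Note that the per-$m$ claim is a \emph{shifted} version of the theorem --- counting $a$ with $a/n \in 1/m + E_{k-1}$ rather than $a/n \in E_{k-1}$ --- and is thus genuinely stronger than what you are inducting on. The gap is invisible when $k-1 = 2$, since then $(nm)^{o(1)} = n^{o(1)}$; it opens precisely at $k \geq 4$, which is exactly where the theorem improves on \eqref{eq:croot-bound}.

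The paper's proof circumvents this obstruction instead of resolving it: it never recurses to $k-1$. It fixes geometrically growing thresholds $n^{\gamma_i}$ with $\gamma_1 = 1/2^{k-2}$ and $\gamma_{i+1} = 2\gamma_i$, and classifies solutions of \eqref{eq:egyptian} by the first index $j \leq k-2$ at which $m_j \geq n^{\gamma_j}$. If such a $j$ exists, no induction is needed at all: there are at most $n^{\gamma_1 + \dotsb + \gamma_{j-1}}$ choices of the prefix $(m_1, \dotsc, m_{j-1})$, and each prefix confines $a$ to an interval of length $\leq k n^{1 - \gamma_j}$, giving $\leq k n^{1 + \gamma_1 + \dotsb + \gamma_{j-1} - \gamma_j} = k n^{\beta_k}$ solutions. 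If no such $j$ exists, then $m_1, \dotsc, m_{k-2}$ are all small and the last two terms form a binary Egyptian fraction with modulus $n m_1 \dotsb m_{k-2} = n^{O(1)}$; here the bound $A_2(N) = N^{o(1)}$ is applied at the inflated modulus, which costs nothing precisely because the exponent is zero, so this type contributes $\leq n^{\gamma_1 + \dotsb + \gamma_{k-2} + o(1)} = n^{\beta_k + o(1)}$. In other words, the scale-enlargement problem that blocks your recursion at level $k-1$ is pushed all the way down to level $2$, where it disappears. To rescue your scheme you would have to prove the shifted per-$m$ claim, which is a harder statement than the theorem itself; absent that, your argument only reproduces the recursion $\alpha_k = (1+2\alpha_{k-1})/(2+\alpha_{k-1})$ of \eqref{eq:croot-bound}.
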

\begin{proof}
We perform induction on $k$.
The case $k = 2$ follows from \eqref{eq:croot-bound}, so assume $k \geq 3$.

Set $\gamma_i = 2^{i - 1}/2^{k}$.
For $1 \leq j \leq k -2 $, we say that such a tuple $(m_1, \dotsc, m_k)$ with $m_1 \leq \dotsb \leq m_k$ is of \indef{type $j$} if $m_j \geq n^{\gamma_j}$ and $m_i < n^{\gamma_i}$ for $1 \leq i < j$.
In addition, we say that such a tuple is of \indef{type $k-1$} if $m_i < n^{\gamma_i}$ for all $1 \leq i \leq k - 2$, no restrictions being placed on $m_{k-1}$ or $m_k$.
It follows from this definitions that any tuple $(m_1, \dotsc, m_k)$ has a type $1 \leq j \leq k - 1$.

We now show that the number of solutions to \eqref{eq:egyptian} of any given type is $\leq n^{\beta_k + o(1)}$, and the Theorem follows.
For instance, type 1 solutions satisfy
\begin{equation*}
    \frac{a}{n} = \frac{1}{m_1} + \dotsb + \frac{1}{m_k}
    \leq \frac{k}{m_1}
    \leq kn^{-\gamma_1}.
\end{equation*}
Thus, $a \leq kn^{1 - \gamma_1} = kn^{\beta_k}$.
That is, there are $\leq kn^{\beta_k}$ solutions of type 1.

Indeed, for type $j$ solutions, $1 \leq j \leq k - 2$, we have
\begin{equation*}
    0 \leq \frac{a}{n} - \frac{1}{m_1} - \dotsb - \frac{1}{m_{j-1}} = \frac{1}{m_j} + \dotsb + \frac{1}{m_k}
    \leq \frac{k-j+1}{m_j}
    \leq k n^{-\gamma_j}.
\end{equation*}
Therefore,
\begin{equation*}
    0 \leq a - \frac{n}{m_1} - \dotsb - \frac{n}{m_{j-1}}
    \leq k n^{1-\gamma_j},
\end{equation*}
or in other words, given $(m_1, \dotsc, m_{j-1})$, there are at most $k n^{1 - \gamma_j}$ choices for $a$.
Thus, the number of solutions of \eqref{eq:egyptian} of type $j$ is at most
\begin{equation*}
    kn^{1 - \gamma_j} m_1 \dotsb m_{j-1}
    \leq k n^{1 + \gamma_1 + \dotsb + \gamma_{j-1} - \gamma_j}
    = k n^{\beta_k}.
\end{equation*}

Solutions of type $k-1$ are handled more efficiently via binary Egyptian fractions.
Note that
\begin{equation*}
    0 \leq \frac{a}{n} - \frac{1}{m_1} - \dotsb - \frac{1}{m_{k-2}} = \frac{1}{m_{k-1}} + \frac{1}{m_k}.
\end{equation*}
So writing $a'= a m_1 \dotsb m_{k-2} - n m_1 \dotsb m_{k-2}(1/m_1 + \dotsb + 1/m_{k-2})$, we have that $a'$ is an integer satisfying $0 \leq a' \leq n m_1 \dotsb m_{k-2}$ and
\begin{equation*}
    \frac{a'}{n m_1 \dotsb m_{k-2}}
    = \frac{1}{m_{k-1}} + \frac{1}{m_k}.
\end{equation*}
Therefore, given $(m_1, \dotsc, m_{k-2})$, there are at most $A_2(n m_1 \dotsb m_{k-2})$ choices of $a$.
Thus, the number of choices of $a$ is bounded by
\begin{equation*}
    \sum_{\mathclap{\substack{m_i \leq n^{\gamma_i} \\ 1 \leq i \leq k-2}}} A_2(n m_1 \dotsb m_{k-2})
    \leq n^{\gamma_1 + \dotsb + \gamma_{k-2} + o(1)}
    = n^{\beta_k + o(1)},
\end{equation*}
as $A_2(n) = n^{o(1)}$ from \eqref{eq:croot-bound} and $n m_1 \dotsb m_{k-2} = n^{O(1)}$.
\end{proof}

We get an improved exponent for all $k \geq 4$.
To illustrate the change, the first exponents in \eqref{eq:croot-bound} are $\alpha_2 = 0$, $\alpha_3 = 1/2$, $\alpha_4 = 4/5$, $\alpha_5 = 13/14$ and $\alpha_6 = 79/81$, whereas the new exponents are $\beta_2 = 0$, $\beta_3 = 1/2$, $\beta_4 = 3/4$, $\beta_5 = 7/8$ and $\beta_6 = 15/16$.

It is still expected, however, that $A_k(n) = n^{o(1)}$ for all $k \geq 2$.
Nonetheless, even the weaker statement that $\sum_{n \leq x} A_k(n) = x^{1 + o(1)}$ remains unproven for $k \geq 3$.

Using our argument, if one shows that $A_k(n) = n^{o(1)}$ holds for some fixed $k$, we get that $A_{k + \ell}(n) \leq n^{1 - 1/2^\ell + o(1)}$ for all $\ell \geq 1$.
In fact, any improvement on the exponent in $A_k(n) \leq n^{\beta_k + o(1)}$ can be propagated to an improvement on $A_{k+\ell}(n)$.


\section*{Acknowledgements}

The authors are grateful for stimulating discussions with Carlo Adajar, Adva Mond and Julien Portier.
The second named author would like to thank his supervisor Professor Béla Bollobás for his continuous support.


\begin{bibdiv}
\begin{biblist}

\bib{Croot2000-pk}{article}{
      author={Croot, Ernest~S},
      author={Dobbs, David~E},
      author={Friedlander, John~B},
      author={Hetzel, Andrew~J},
      author={Pappalardi, Francesco},
       title={{Binary Egyptian Fractions}},
        date={2000},
        ISSN={0022-314X},
     journal={Journal of Number Theory},
      volume={84},
      number={1},
       pages={63\ndash 79},
}

\end{biblist}
\end{bibdiv}


\end{document}